\newtheorem{theorem}{Theorem}[section]
\newtheorem{definition}[theorem]{Definition}
\newtheorem{corollary}[theorem]{Corollary}
\newtheorem{lemma}[theorem]{Lemma} 
\newtheorem{proposition}[theorem]{Proposition} 
\newtheorem{remark}[theorem]{Remark}
\title{Markov-Induced CM Model}
\author{
  Reza Rezaie and X. Rong Li\\
 Department of Electrical Engineering\\
 University of New Orleans\\
New Orleans, LA 70148 \\
  \texttt{rrezaie@uno.edu} and \texttt{xli@uno.edu} \\
}
\begin{document}

\maketitle

\begin{abstract}
Conditionally Markov (CM) sequences are powerful mathematical tools for modeling random phenomena. There are several classes of CM sequences one of which is the reciprocal sequence. Reciprocal sequences have been widely used in many areas including image processing, intelligent systems, and acausal systems. To use them in application, we need not only their applicable dynamic models, but also some general approaches to designing parameters of dynamic models. Dynamic models governing two important classes of nonsingular Gaussian (NG) CM sequences (called $CM_L$ and $CM_F$ models), and a dynamic model governing the NG reciprocal sequence (called reciprocal $CM_L$ model) were presented in our previous work. In this paper, these models are studied in more detail and general approaches are presented for their parameter design. It is shown that every reciprocal $CM_L$ model can be induced by a Markov model and parameters of the reciprocal $CM_L$ model can be obtained from those of the Markov model. Also, it is shown how NG CM sequences can be represented in terms of a NG Markov sequence and an independent NG vector. This representation provides a general approach for parameter design of $CM_L$ and $CM_F$ models. In addition, it leads to a better understanding of CM sequences, including the reciprocal sequence.

\end{abstract}

\textbf{Keywords:} Conditionally Markov (CM) sequence, reciprocal sequence, Markov sequence, Gaussian sequence, dynamic model, characterization.

\section{Introduction}

Reciprocal processes have been applied to many different areas, including stochastic mechanics, image processing, acausal systems, trajectory modeling and intent inference, and intelligent systems (e.g., \cite{Levy_1}--\cite{DW_Conf}). In \cite{Levy_1}--\cite{Levy_2}, reciprocal processes were discussed in the context of stochastic mechanics. \cite{Picci}--\cite{Picci2} considered the application of reciprocal processes in image processing. In \cite{Krener1} the behavior of acausal systems was described using reciprocal processes. Based on quantized state space, \cite{Fanas1}--\cite{White_T} used finite-state reciprocal sequences for detection of anomalous trajectory pattern, intent inference, and tracking. \cite{Simon} used the idea of reciprocal processes for intent inference in intelligent interactive displays of vehicles. \cite{DD_Conf}--\cite{DW_Conf} proposed some classes of CM sequences (including reciprocal) for trajectory modeling.

Since they are desirable for many applications, reciprocal processes should be studied more to develop easily applicable tools for their application. This is the main goal of this paper. 

Consider stochastic sequences defined over the interval $[0,N]=\lbrace 0,1,\ldots,N \rbrace$. For convenience, let the index be time. A sequence is Markov iff conditioned on the state at any time $k$, the subsequences before and after $k$ are independent. A sequence is reciprocal iff conditioned on the states at any two times $k_1$ and $k_2$, the subsequences inside and outside the interval $[k_1,k_2]$ are independent. In other words, inside and outside are independent given the boundaries. A sequence is CM over $[0,N]$ iff conditioned on the state at time $0$ ($N$), the sequence is Markov over $(0,N]$ ($[0,N)$). The Markov sequence and the reciprocal sequence are two important classes of CM sequences.

CM processes were introduced in \cite{Mehr} for Gaussian processes based on mean and covariance functions. Also, stationary Gaussian CM processes were characterized, and construction of some non-stationary Gaussian CM processes was discussed. \cite{ABRAHAM} extended the definition of Gaussian CM processes (presented in \cite{Mehr}) to the general (Gaussian/non-Gaussian) case. The relationship between the CM process and the reciprocal process was studied in \cite{CM_Part_II_A_Conf}, \cite{ABRAHAM}, \cite{Carm}. Moreover, it was shown how continuous time Gaussian CM processes can be represented in terms of a Wiener process and an uncorrelated Gaussian random vector \cite{ABRAHAM}, \cite{Mehr}. In \cite{CM_Part_I}, different (Gaussian/non-Gaussian) CM sequences were defined based on conditioning at the first or the last index (time) of the CM interval, and (stationary/non-stationary) NG CM sequences were studied. Also, dynamic models and characterizations of NG CM sequences were presented. 

Reciprocal processes were introduced in \cite{Berstein} in connection to a problem posed by E. Schrodinger \cite{Schrodinger_1}--\cite{Schrodinger_2}. Later, they were studied more in \cite{Slepian}--\cite{CM_Explicitly} and others. A dynamic model and a characterization of the NG reciprocal sequence were presented in \cite{Levy_Dynamic}. It was shown that the evolution of a reciprocal sequence can be described by a second-order nearest-neighbor model driven by locally correlated dynamic noise \cite{Levy_Dynamic}. That model is interesting since it can be considered a generalization of the Markov model. However, due to the correlation of the dynamic noise as well as the nearest-neighbor structure, it is not necessarily easy to apply \cite{DD_Conf}--\cite{DW_Conf}. A dynamic model (governing the NG reciprocal sequence) was presented in \cite{CM_Part_II_A_Conf} from the viewpoint of the CM sequence. This dynamic model, called the reciprocal $CM_L$ dynamic model, is driven by white noise and is easily applicable. 

The contributions of this paper are as follows. In \cite{CM_Part_II_A_Conf}, it was not discussed how the parameters of the reciprocal $CM_L$ model can be designed in application. In this paper, an approach is presented for designing these parameters. Also, inspired by \cite{ABRAHAM}, a representation of NG CM sequences in terms of a NG Markov sequence and an uncorrelated NG vector is obtained. In addition, construction of a NG CM sequence based on a NG Markov sequence and an uncorrelated NG vector is presented. This representation and construction of CM sequences can be used for designing parameters of $CM_L$ and $CM_F$ dynamic models based on those of the Markov model. Besides providing a better insight into CM and reciprocal sequences, the results of this paper make these sequences more easily applicable.

The paper is organized as follows. Section \ref{Definitions_Preliminaries} reviews some definitions and results required for later sections. In Section \ref{Section_Models}, an approach is presented for design of parameters of the reciprocal $CM_L$ model. In Section \ref{Section_CM_Markov}, a representation (construction) of a NG CM sequence as a sum of a NG Markov sequence and an uncorrelated NG vector is discussed. Section \ref{Section_Summary_Conclusions} contains a summary and conclusions.

\section{Definitions and Preliminaries}\label{Definitions_Preliminaries}

\subsection{Conventions}\label{Subsection_Convention}

Throughout the paper we consider stochastic sequences defined over the interval $[0,N]$, which is a general discrete index interval. For convenience this discrete index is called time. The following conventions are used throughout the paper:
\begin{align*}
[i,j]& \triangleq \lbrace i,i+1,\cdots ,j-1,j \rbrace\\
(i,j)& \triangleq \lbrace i+1,i+2,\cdots ,j-2,j-1 \rbrace\\
[x_k]_{i}^{j} & \triangleq \lbrace x_k, k \in [i,j] \rbrace\\
[x_k] & \triangleq [x_k]_{0}^{N}\\
[x_k]_{J} & \triangleq \lbrace x_k, k \in J \rbrace, \quad J \subset [0,N]\\
i,j, k_1,k_2, l_1, l_2& \in [0,N], k_1<k_2, i<j 
\end{align*}
where $k$ in $[x_k]_i^j$ is a dummy variable. We use ``$ \setminus $" for set subtraction. $C_{l_1,l_2}$ is a covariance function. $C$ is the covariance matrix of the whole sequence $[x_k]$. Also, $0$ may denote a zero scalar, vector, or matrix, as is clear from the context. $F(\cdot | \cdot)$ denotes the conditional cumulative distribution function (CDF).

The abbreviations ZMNG and NG are used for ``zero-mean nonsingular Gaussian" and ``nonsingular Gaussian", respectively.

\subsection{Definitions and Notations}\label{Definitions}

CM sequences are defined as follows. A sequence $[x_k]$ is $CM_c, c \in \lbrace 0,N \rbrace$, iff conditioned on the state at time $0$ (or $N$), the sequence is Markov over $(0,N]$ ($[0,N)$). The above definition is equivalent to the following lemma \cite{CM_Part_I}.

\begin{lemma}\label{CMc_CDF}
$[x_k]$ is $CM_c, c \in \lbrace 0,N \rbrace$, iff 
 \begin{align}
 F(\xi _k|[x_{i}]_{0}^{j},x_{c})=F(\xi _k|x_j,x_c)\label{CDF_1}
 \end{align} 
for every $j,k \in [0,N], j<k$, and every $\xi _k \in \mathbb{R}^d$, where $d$ is the dimension of $x_k$.  

\end{lemma}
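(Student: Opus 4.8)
The plan is to reduce the statement to the standard CDF characterization of the ordinary Markov property and then simply adjoin the conditioning variable $x_c$ to every conditional distribution. I would first record (or cite) the auxiliary equivalence that a sequence $[y_k]$ defined over a discrete interval $[a,b]$ is Markov---in the sense that, given the present, the past and future subsequences are independent---if and only if $F(\xi_k \mid [y_i]_a^j) = F(\xi_k \mid y_j)$ for all $a \le j < k \le b$. This step is needed because the definition of $CM_c$ is phrased through the conditional-independence form of the Markov property, whereas (\ref{CDF_1}) is phrased through the one-sided CDF form, so the two formulations must be bridged once and for all.

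For the forward direction, suppose $[x_k]$ is $CM_c$ and treat $c=0$ first. By definition, conditioned on $x_0$ the sequence $[x_k]_1^N$ is Markov. Adjoining $x_0$ to every conditioning set and applying the auxiliary equivalence to this conditioned law yields $F(\xi_k \mid [x_i]_1^j, x_0) = F(\xi_k \mid x_j, x_0)$ for all $1 \le j < k \le N$. Since $\{x_0\} \cup [x_i]_1^j = [x_i]_0^j$ and here $c=0$, the left-hand side equals $F(\xi_k \mid [x_i]_0^j, x_c)$, which is exactly (\ref{CDF_1}) for $j \ge 1$; the remaining index $j=0$ is trivial because both sides collapse to $F(\xi_k \mid x_0)$. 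The case $c=N$ is symmetric: conditioned on $x_N$ the sequence $[x_k]_0^{N-1}$ is Markov, giving (\ref{CDF_1}) for $k \le N-1$, while the boundary index $k=N$ is again trivial since, given $x_N$, the conditional CDF of $x_N$ is degenerate and is unaffected by any further conditioning.

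The converse runs the same argument in reverse. Assuming (\ref{CDF_1}), I would restrict attention to the indices lying in the relevant Markov interval ($j \ge 1$ when $c=0$, and $k \le N-1$ when $c=N$), read off the one-sided CDF equalities for the $x_c$-conditioned subsequence, and invoke the reverse implication of the auxiliary equivalence to conclude that this subsequence is Markov, i.e. that $[x_k]$ is $CM_c$.

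I expect the only genuine obstacle to be the auxiliary equivalence itself, specifically the step that upgrades the single-future-time identities $F(\xi_k \mid [y_i]_a^j) = F(\xi_k \mid y_j)$ to full conditional independence of the entire future block from the entire past block given the present; this requires a chain-rule/induction argument that builds the joint future CDF one coordinate at a time. The accounting at the boundary indices, where (\ref{CDF_1}) holds vacuously and carries no Markov content, is a secondary point that must be stated explicitly but is otherwise routine.
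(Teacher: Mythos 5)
The paper itself offers no proof of this lemma: it is imported verbatim from reference [18] (\texttt{CM\_Part\_I}) as an equivalent restatement of the $CM_c$ definition, so there is no internal argument to compare yours against. Judged on its own, your proof is correct and is the natural one: condition on $x_c$, observe that the displayed CDF identity is exactly the one-sided Markov CDF characterization applied to the conditional law given $x_c$, and handle the two vacuous boundary indices ($j=0$ when $c=0$, since $x_0$ already lies in the conditioning set, and $k=N$ when $c=N$, where both sides are degenerate at $x_N$). Two small remarks. First, the auxiliary equivalence you flag as the only real obstacle need not be proved from scratch: it is precisely the Markov CDF lemma the paper states alongside this one (over the subinterval $(0,N]$ or $[0,N)$, applied under the conditional measure), so in the paper's own logic it would simply be cited, with the chain-rule/induction upgrade from single-future-time identities to full block conditional independence living in the same reference. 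Second, when you ``apply the auxiliary equivalence to the conditioned law,'' the statement holds for almost every value of $x_c$ under its marginal, via regular conditional distributions; this measure-theoretic qualifier should be stated, though it is routine and does not affect the argument.
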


\begin{remark}\label{R_CMN}
For the forward direction, we have
\begin{align*}
CM_c=\left\{ \begin{array}{cc} 
CM_F & \text{if} c=0\\
CM_L & \text{if} c=N
\end{array} \right.
\end{align*}
where the subscript ``$F$" or ``$L$" is used because the conditioning is at the \textit{first} or \textit{last} time.

\end{remark}

Forward and backward directions are parallel. So we only consider the forward one.

The reciprocal sequence is defined as follows. A sequence is reciprocal iff the subsequences inside and outside the interval $[k_1,k_2]$ are independent conditioned on the boundaries $x_{k_1}$ and $x_{k_2}$ ($\forall k_1,k_2 \in [0,N]$). The above definition is equivalent to the following lemma \cite{CM_Part_II_A_Conf}, \cite{Jamison_1}.

\begin{lemma}\label{CDF}
$[x_k]$ is reciprocal iff 
\begin{align}
F(\xi _k|[x_{i}]_{0}^{j},[x_i]_l^N)=F(\xi _k|x_j,x_l)\label{CDF_1}
\end{align} 
for every $j,k,l \in [0,N]$ ($j < k < l$), and every $\xi _k \in \mathbb{R}^d$, where $d$ is the dimension of $x_k$.  

\end{lemma}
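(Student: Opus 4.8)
The plan is to prove the two implications of the ``iff'' separately. For the forward direction — that a reciprocal sequence satisfies the claimed conditional CDF identity — I would simply specialize the defining conditional independence. Given $j<k<l$, take the boundaries to be $k_1=j$ and $k_2=l$, so that the ``inside'' subsequence $[x_i]_{j+1}^{l-1}$ contains $x_k$ and the ``outside'' subsequence is $[x_i]_0^{j-1}\cup[x_i]_{l+1}^N$. The conditioning set $[x_i]_0^j,[x_i]_l^N$ on the left of the identity is exactly the outside subsequence together with the boundaries $x_j,x_l$. By the reciprocal property the inside and outside are independent given the boundaries, and marginalizing the joint conditional CDF of the inside subsequence down to its single component $x_k$ then yields $F(\xi_k|[x_i]_0^j,[x_i]_l^N)=F(\xi_k|x_j,x_l)$, as required.

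For the converse — that the CDF identity implies the reciprocal property — I would fix arbitrary boundaries $k_1<k_2$ and compute the joint conditional CDF of the inside subsequence $[x_i]_{k_1+1}^{k_2-1}$ given the outside-plus-boundary data $[x_i]_0^{k_1},[x_i]_{k_2}^N$. Applying the chain rule, conditioning the inside components from left to right, factors this joint CDF into a product whose $m$-th factor is $F(x_m|[x_i]_0^{m-1},[x_i]_{k_2}^N)$ for $m=k_1+1,\ldots,k_2-1$; here the growing left block $[x_i]_0^{m-1}$ absorbs the previously processed inside components while the right block stays $[x_i]_{k_2}^N$. Each such factor has precisely the block structure required to invoke the hypothesis, with $j=m-1$, $k=m$, $l=k_2$ (and $m-1<m<k_2$ holds), collapsing it to $F(x_m|x_{m-1},x_{k_2})$.

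The key observation is then that the resulting product $\prod_{m=k_1+1}^{k_2-1}F(x_m|x_{m-1},x_{k_2})$ depends only on the boundaries $x_{k_1},x_{k_2}$ and the inside components, and not at all on the outside components $[x_i]_0^{k_1-1}$ and $[x_i]_{k_2+1}^N$. Since the conditional CDF of the inside subsequence given the outside-plus-boundary data does not depend on the outside values, it must coincide with the conditional CDF given the boundaries alone, which is exactly the statement that inside and outside are independent given the boundaries.

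I expect the main obstacle to be the bookkeeping in the converse: making the left-to-right chain-rule peeling precise so that each factor genuinely matches the block structure $([x_i]_0^{m-1},[x_i]_{k_2}^N)$ demanded by the hypothesis, and then rigorously arguing that ``the conditional CDF does not depend on the outside conditioning variables'' implies conditional independence (via the tower property, integrating the factored expression over the outside given the boundaries). Note that the Gaussian and nonsingularity structure play no essential role — the equivalence is purely measure-theoretic — so the effort lies in careful handling of conditional CDFs of multivariate blocks rather than in any deeper probabilistic input.
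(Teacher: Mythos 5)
Your proposal is correct, and it is worth noting that the paper itself gives no proof of this lemma at all: it is quoted as a known result, with citations to Jamison's paper and the authors' earlier conference paper, so there is nothing internal to compare against. Your argument is the standard one for exactly this equivalence: the forward direction by specializing the conditional-independence definition to boundaries $k_1=j$, $k_2=l$ and marginalizing the inside block down to $x_k$, and the converse by peeling the inside components left to right, collapsing each one-step conditional via the hypothesis with $j=m-1$, $k=m$, $l=k_2$, and concluding conditional independence from the fact that the resulting conditional law is $\sigma(x_{k_1},x_{k_2})$-measurable. One caution on the writing rather than the mathematics: the ``chain rule'' for conditional CDFs is not a literal product of CDFs evaluated at points --- the joint conditional CDF is an iterated integral (a composition of one-step kernels), so the induction should be phrased as: the conditional probability of each product set $A_{k_1+1}\times\cdots\times A_m$ given $\sigma([x_i]_0^{k_1},[x_i]_{k_2}^N)$ has a version that is a function of $(x_{k_1},x_{k_2})$ alone, proved by the tower property step by step, followed by a monotone-class extension to general Borel sets. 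You flag precisely this bookkeeping as the main obstacle and indicate the tower-property fix, so the gap is one of polish, not of substance; your observation that the argument is purely measure-theoretic, with no use of Gaussianity or nonsingularity, is also accurate and consistent with the generality at which the paper states the lemma.
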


\begin{lemma}\label{CMc_CDF}
$[x_k]$ is Markov iff 
 \begin{align}
 F(\xi _k|[x_{i}]_{0}^{j})=F(\xi _k|x_j)\label{CDF_1}
 \end{align} 
for every $j,k \in [0,N], j<k$, and every $\xi _k \in \mathbb{R}^d$, where $d$ is the dimension of $x_k$.  

\end{lemma}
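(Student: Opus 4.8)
The plan is to prove both directions of the equivalence, viewing the Markov characterization as the degenerate case of the reciprocal characterization in Lemma \ref{CDF} in which no boundary state is retained in the conditioning (reciprocal keeps both $x_j$ and $x_l$, $CM_c$ keeps $x_j$ and $x_c$, while Markov keeps only $x_j$). Concretely, I would take as the working definition the statement from Section \ref{Definitions} that $[x_k]$ is Markov iff for every time $m$ the past $[x_i]_0^{m-1}$ and the future $[x_i]_{m+1}^N$ are independent given $x_m$, and show this is equivalent to the one-step predictive identity $F(\xi_k|[x_i]_0^j)=F(\xi_k|x_j)$ holding for all $j<k$.

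For the forward direction (Markov $\Rightarrow$ identity), I would fix $j<k$ and apply the Markov property at the split time $m=j$. This makes the strict past $[x_i]_0^{j-1}$ independent of the entire subsequence after $j$ given $x_j$; since $x_k$ lies after $j$, in particular $x_k$ is conditionally independent of $[x_i]_0^{j-1}$ given $x_j$. Hence augmenting the conditioning set from $x_j$ to the full history $[x_i]_0^{j}$ leaves the conditional law of $x_k$ unchanged, which is exactly $F(\xi_k|[x_i]_0^{j})=F(\xi_k|x_j)$.

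For the backward direction (identity $\Rightarrow$ Markov), the route is factorization of the joint law. Specializing the hypothesis to $j=k-1$ gives $F(\xi_k|[x_i]_0^{k-1})=F(\xi_k|x_{k-1})$, so the chain-rule expansion of the joint density collapses to the telescoping product $f([x_k])=f(x_0)\prod_{k=1}^{N}f(x_k|x_{k-1})$. Splitting this product at an arbitrary time $m$ and dividing by $f(x_m)$ rewrites the full joint density as $f([x_i]_0^{m})\,f([x_i]_m^{N})/f(x_m)$, and conditioning on $x_m$ then factors it as $f([x_i]_0^{m-1}|x_m)\,f([x_i]_{m+1}^{N}|x_m)$. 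This is precisely the conditional independence of past and future given the present, i.e., the Markov property, and since $m$ was arbitrary the proof is complete.

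The main obstacle is this backward direction, and specifically the passage from the telescoping factorization to the conditional-independence statement. One must justify working with a genuine joint density (immediate in the NG setting, where every finite-dimensional law is nonsingular Gaussian and hence admits a density, so that $f(x_m)>0$ and the division is legitimate), verify that the split at $m$ cleanly partitions the factors into a past block and a future block sharing only $x_m$, and confirm that the resulting expression is the actual product of the two conditional laws rather than a purely formal rearrangement. Once densities are in hand this is routine bookkeeping; for a fully general (possibly singular) statement the same argument can instead be carried out at the level of conditional CDFs as in Lemma \ref{CDF}, replacing the density factorization by the corresponding Markov factorization of $F(\cdot|\cdot)$.
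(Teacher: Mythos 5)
Your proof is correct, but there is nothing in the paper to compare it against: the paper states this lemma without proof, as a known equivalence imported from the references (in the same spirit as the CDF characterizations of $CM_c$ and reciprocal sequences it quotes from \cite{CM_Part_I}, \cite{CM_Part_II_A_Conf}, \cite{Jamison_1}). Judged on its own merits, your argument is the standard one and it is sound. The forward direction is the usual observation that conditional independence of blocks passes to sub-blocks, so conditioning on the extra past $[x_i]_0^{j-1}$ beyond $x_j$ does not change the law of $x_k$. In the backward direction, your telescoping chain-rule factorization $f([x_k])=f(x_0)\prod_{k=1}^{N}f(x_k|x_{k-1})$, split at an arbitrary $m$, does give the claimed product structure: integrating out the future factors sequentially yields $f([x_i]_0^{m})=f(x_0)\prod_{k=1}^{m}f(x_k|x_{k-1})$, integrating out the past yields $f([x_i]_m^{N})=f(x_m)\prod_{k=m+1}^{N}f(x_k|x_{k-1})$, so the joint equals $f([x_i]_0^{m})\,f([x_i]_m^{N})/f(x_m)$, which is precisely conditional independence of past and future given $x_m$; your bookkeeping concern is thus fully resolved. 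Two small remarks. First, you use the hypothesis only at $j=k-1$, which is legitimate (a stronger hypothesis is assumed) and in fact shows the one-step identity alone already implies the Markov property. Second, since the lemma, like its reciprocal and $CM_c$ counterparts, is phrased via conditional CDFs precisely so that no density (or Gaussian) assumption is needed, your density-based argument strictly covers only the case where densities exist; your closing remark that the same telescoping can be carried out at the level of conditional CDFs, or equivalently via conditional expectations using the tower property and induction over the future times, is the right repair for full generality, and within the paper's NG setting the density argument is already complete.
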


\subsection{Preliminaries}\label{Preliminaries}

We review some results for later sections \cite{CM_Part_I}, \cite{CM_Part_II_A_Conf}, \cite{Levy_Dynamic}, \cite{Ackner}. 

\begin{definition}\label{CMc_Matrix}
A symmetric positive definite matrix is called $CM_L$ if it has form $\eqref{CML}$ and $CM_F$ if it has form $\eqref{CMF}$.
\begin{align}\left[
\begin{array}{ccccccc}
A_0 & B_0 & 0 & \cdots & 0 & 0 & D_0\\
B_0' & A_1 & B_1 & 0 & \cdots & 0 & D_1\\
0 & B_1' & A_2 & B_2 & \cdots & 0 & D_2\\
\vdots & \vdots & \vdots & \vdots & \vdots & \vdots & \vdots\\
0 & \cdots & 0 & B_{N-3}' & A_{N-2}  & B_{N-2} & D_{N-2}\\
0 & \cdots & 0 & 0 & B_{N-2}' & A_{N-1} & B_{N-1}\\
D_0' & D_1' & D_2' & \cdots & D_{N-2}' & B_{N-1}' & A_N
\end{array}\right]\label{CML}
\end{align}
\begin{align}\left[
\begin{array}{ccccccc}
A_0 & B_0 & D_2 & \cdots & D_{N-2} & D_{N-1} & D_{N}\\
B_0' & A_1 & B_1 & 0 & \cdots & 0 & 0\\
D_2' & B_1' & A_2 & B_2 & \cdots & 0 & 0\\
\vdots & \vdots & \vdots & \vdots & \vdots & \vdots & \vdots\\
D_{N-2}' & \cdots & 0 & B_{N-3}' & A_{N-2}  & B_{N-2} & 0\\
D_{N-1}' & \cdots & 0 & 0 & B_{N-2}' & A_{N-1} & B_{N-1}\\
D_{N}' & 0 & 0 & \cdots & 0 & B_{N-1}' & A_N
\end{array}\right]\label{CMF}
\end{align}

\end{definition}

\begin{definition}\label{CMc_Matrix}
A matrix is called cyclic tri-diagonal if it has form $\eqref{Cyclic_Tridiagonal}$.
\begin{align}\left[
\begin{array}{ccccccc}
A_0 & B_0 & 0 & \cdots & 0 & 0 & D_0\\
B_0' & A_1 & B_1 & 0 & \cdots & 0 & 0\\
0 & B_1' & A_2 & B_2 & \cdots & 0 & 0\\
\vdots & \vdots & \vdots & \vdots & \vdots & \vdots & \vdots\\
0 & \cdots & 0 & B_{N-3}' & A_{N-2}  & B_{N-2} & 0\\
0 & \cdots & 0 & 0 & B_{N-2}' & A_{N-1} & B_{N-1}\\
D_0' & 0 & 0 & \cdots & 0 & B_{N-1}' & A_N
\end{array}\right]\label{Cyclic_Tridiagonal}
\end{align}

\end{definition}

\begin{definition}\label{CMc_Matrix}
A matrix is called tri-diagonal if it has form $\eqref{Tridiagonal}$.
\begin{align}\left[
\begin{array}{ccccccc}
A_0 & B_0 & 0 & \cdots & 0 & 0 & 0\\
B_0' & A_1 & B_1 & 0 & \cdots & 0 & 0\\
0 & B_1' & A_2 & B_2 & \cdots & 0 & 0\\
\vdots & \vdots & \vdots & \vdots & \vdots & \vdots & \vdots\\
0 & \cdots & 0 & B_{N-3}' & A_{N-2}  & B_{N-2} & 0\\
0 & \cdots & 0 & 0 & B_{N-2}' & A_{N-1} & B_{N-1}\\
0 & 0 & 0 & \cdots & 0 & B_{N-1}' & A_N
\end{array}\right]\label{Tridiagonal}
\end{align}

\end{definition}

$A_k$, $B_k$, and $D_k$ are matrices in general.  

\begin{remark}
We use $CM_c$ to mean either $CM_L$ or $CM_F$ or both.

\end{remark}

\begin{theorem}\label{CML_Characterization} 
(i) A NG sequence with covariance matrix $C$ is $CM_c$ iff $C^{-1}$ has the $CM_c$ form.

(ii) A NG sequence with covariance matrix $C$ is reciprocal iff $C^{-1}$ is cyclic tri-diagonal.

(iii) A NG sequence with covariance matrix $C$ is Markov iff $C^{-1}$ is tri-diagonal.

\end{theorem}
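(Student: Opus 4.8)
The plan is to work with the precision (information) matrix $K \triangleq C^{-1}$ and exploit two standard facts about a ZMNG vector $x = [x_0',\ldots,x_N']'$ with density proportional to $\exp(-\tfrac12 x'Kx)$ (I take zero mean without loss of generality, since a mean shift affects neither $C$ nor any conditional independence relation). First, conditioning on a subset of the blocks yields a Gaussian whose precision matrix is the corresponding principal submatrix of $K$. Second, for a single block, $E[x_k\mid x_{[0,N]\setminus\{k\}}]$ is a linear combination of the remaining blocks in which $x_i$ appears with nonzero coefficient iff $K_{ki}\neq 0$; equivalently, the conditional law of $x_k$ given all other blocks depends on exactly those $x_i$ with $K_{ki}\neq 0$. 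I would prove (iii) first as the base case, obtain (i) by reducing $CM_c$ to a conditional Markov statement, and finally establish (ii) directly from the reciprocal characterization.

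For (iii), in the forward direction the Markov property gives $p(x_k\mid [x_i]_0^{k-1})=p(x_k\mid x_{k-1})$, so the chain rule factorizes the density as $p(x_0)\prod_{k=1}^N p(x_k\mid x_{k-1})$. Each Gaussian factor $\log p(x_k\mid x_{k-1})$ is quadratic in the consecutive pair $(x_{k-1},x_k)$ only, so summing contributes cross terms solely between adjacent indices; matching with $-\tfrac12 x'Kx$ forces $K_{ij}=0$ for $|i-j|\ge 2$, i.e. $K$ is tri-diagonal as in \eqref{Tridiagonal}. Conversely, if $K$ is tri-diagonal, the only term of $x'Kx$ coupling the past $\{0,\ldots,j\}$ to $\{j+1,\ldots,N\}$ is the one through $K_{j,j+1}$, so $p(x_{j+1},\ldots,x_N\mid x_0,\ldots,x_j)$ depends on the past only through $x_j$; marginalizing out $x_{j+1},\ldots,x_{k-1},x_{k+1},\ldots,x_N$ yields $F(\xi_k\mid [x_i]_0^j)=F(\xi_k\mid x_j)$, the Markov property.

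For (i) with $c=N$, note that the $CM_L$ CDF characterization is exactly the statement that, conditioned on $x_N$, the truncated sequence $x_0,\ldots,x_{N-1}$ is Markov. By the first standard fact, the precision matrix of this conditional law is the leading $N\times N$ principal submatrix $K_{[0,N-1],[0,N-1]}$ of $K$. Applying (iii) to that length-$N$ conditional sequence, it is Markov iff $K_{[0,N-1],[0,N-1]}$ is tri-diagonal, which by inspection is precisely the requirement that $K$ have the form \eqref{CML} (tri-diagonal in the leading block, with the last block row and column $D_0,\ldots,D_{N-2},B_{N-1}$ unconstrained). The case $c=0$ is identical after conditioning on $x_0$ and reading off the trailing principal submatrix, giving the $CM_F$ form \eqref{CMF}.

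For (ii), in the forward direction I would specialize the reciprocal CDF characterization (Lemma \ref{CDF}) to $j<k<l$ with $l=j+2$ and $k=j+1$: then the conditioning set $[x_i]_0^j\cup[x_i]_{j+2}^N$ is all blocks except $x_{j+1}$, and the law of $x_{j+1}$ given all others depends only on $x_j$ and $x_{j+2}$. By the second standard fact this forces row $j+1$ of $K$ to vanish outside columns $j,j+1,j+2$, for every $j+1\in\{1,\ldots,N-1\}$; thus all interior rows are tri-diagonal, and by symmetry of $K$ the only off-tri-diagonal entry that can survive is the corner block $K_{0,N}$, yielding the cyclic tri-diagonal form \eqref{Cyclic_Tridiagonal}. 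Conversely, for cyclic tri-diagonal $K$ and any $j<k<l$, the only terms of $x'Kx$ coupling the interior $(j,l)$ to its complement pass through $K_{j,j+1}$ and $K_{l-1,l}$, since the extra corner edge joins $0$ and $N$, both of which lie outside the interval; hence $p([x_i]_{j+1}^{l-1}\mid [x_i]_0^j,[x_i]_l^N)$ depends on the complement only through $x_j$ and $x_l$, and marginalizing recovers the reciprocal identity of Lemma \ref{CDF}. The main obstacle is precisely this treatment of the wrap-around corner: one must verify that the corner term never induces interior-to-exterior coupling (it does not, exactly because $0$ and $N$ are both exterior to every $(j,l)$), and correspondingly that the symmetry argument confines the boundary rows $0$ and $N$ to the cyclic pattern rather than to the full tri-diagonal one.
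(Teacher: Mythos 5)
Your proposal is correct, but be aware that the paper itself contains no proof of Theorem \ref{CML_Characterization}: it is stated as review material quoted from \cite{CM_Part_I}, \cite{CM_Part_II_A_Conf}, \cite{Levy_Dynamic}, \cite{Ackner}, so there is no internal argument to compare against; your write-up is in substance the standard precision-matrix argument those references rely on. All the key steps check out: the chain-rule factorization and the splitting of $x'Kx$ across an index $j$ give (iii); the reduction of $CM_c$ to Markovianity of the conditional law, combined with the fact that the conditional precision of a subset of blocks is the corresponding principal submatrix of $K$, gives (i) and correctly explains why the last block row and column in \eqref{CML} (respectively the first in \eqref{CMF}) are unconstrained; and your treatment of the wrap-around corner in (ii) is exactly right --- taking $k=j+1$, $l=j+2$ in Lemma \ref{CDF} constrains only the interior rows $1,\dots,N-1$, symmetry of $K$ then confines rows $0$ and $N$ to the cyclic pattern, and in the converse the corner edge joins $0$ and $N$, both exterior to every interval $(j,l)$, so it never couples interior to exterior. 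Two routine points deserve an explicit sentence in a polished version: the equivalence ``the conditional law of $x_k$ given the rest does not depend on $x_i$'' $\Leftrightarrow$ ``$K_{ki}=0$'' uses invertibility of the diagonal block $K_{kk}$ (guaranteed because $K=C^{-1}$ is positive definite; note these are matrix blocks, not scalars), and the final step of each converse --- passing from ``the conditional density given the larger conditioning set is a function of $(x_j,x_l)$ only'' to the CDF identities of the lemmas --- is an application of the tower property. Finally, proving (ii) directly rather than via ``reciprocal $= CM_L \cap CM_F$'' is the right order of logic here: Corollary \ref{CMLCMF_Reciprocal} is itself deduced from this theorem (the intersection of the zero patterns \eqref{CML} and \eqref{CMF} is precisely \eqref{Cyclic_Tridiagonal}), so routing (ii) through the corollary would be circular, and your argument in fact yields that corollary as a byproduct.
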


The following corollary follows from the characterizations presented in Theorem \ref{CML_Characterization} \cite{CM_Part_II_A_Conf}.

\begin{corollary}\label{CMLCMF_Reciprocal}
A NG sequence is reciprocal iff it is both $CM_L$ and $CM_F$.
\end{corollary}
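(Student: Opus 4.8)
The plan is to reduce the statement entirely to the block-sparsity patterns of the inverse covariance matrix, using the three characterizations collected in Theorem \ref{CML_Characterization}. Since a NG sequence $[x_k]$ is $CM_L$ (resp. $CM_F$, resp. reciprocal) exactly when $C^{-1}$ has the $CM_L$ form \eqref{CML} (resp. the $CM_F$ form \eqref{CMF}, resp. the cyclic tri-diagonal form \eqref{Cyclic_Tridiagonal}), it suffices to prove the purely matrix-theoretic equivalence: a symmetric positive definite matrix is cyclic tri-diagonal if and only if it is simultaneously of $CM_L$ form and of $CM_F$ form. The corollary then follows by applying this equivalence to $C^{-1}$.

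First I would dispose of the easy inclusion. A cyclic tri-diagonal matrix \eqref{Cyclic_Tridiagonal} is the special case of the $CM_L$ form \eqref{CML} in which the last-column blocks $D_1,\ldots,D_{N-2}$ all vanish and only the corner block at position $(0,N)$ survives; symmetrically, it is the special case of the $CM_F$ form \eqref{CMF} in which the first-row blocks $D_2,\ldots,D_{N-1}$ all vanish and only the corner block at $(0,N)$ survives. Hence every cyclic tri-diagonal matrix is both $CM_L$ and $CM_F$.

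The substantive direction is the converse, where the key observation concerns the location of the off-tridiagonal blocks. In the $CM_L$ form every nonzero block lying outside the main block-tridiagonal sits in the last block-row or last block-column, i.e. at a position $(i,N)$ or $(N,i)$; in the $CM_F$ form every such block instead sits in the first block-row or first block-column, i.e. at a position $(0,j)$ or $(j,0)$. If a symmetric matrix has both forms, its off-tridiagonal support must lie in the intersection of these two patterns. Intersecting ``last row or column'' with ``first row or column'' leaves only the two corner positions $(0,N)$ and $(N,0)$: a position $(i,N)$ with $1 \le i \le N-2$ lies in neither the first block-row nor the first block-column, so it is forced to zero, and symmetrically for $(N,i)$. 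Thus all off-tridiagonal blocks except the $(0,N)$ and $(N,0)$ corners vanish, which is precisely the cyclic tri-diagonal pattern \eqref{Cyclic_Tridiagonal}.

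I expect the only real care to be the bookkeeping of block indices near the boundary — in particular checking that the corner $(0,N)$ genuinely lies in the ``last column'' of \eqref{CML} and simultaneously in the ``first row'' of \eqref{CMF}, and that no interior off-diagonal position survives the intersection. (For small $N$, say $N \le 2$, the three forms degenerate and coincide, so the claim is trivial; the argument above is written for the generic case $N \ge 3$.) Once this sparsity intersection is pinned down, combining it with the three equivalences of Theorem \ref{CML_Characterization} immediately yields that $[x_k]$ is reciprocal iff it is both $CM_L$ and $CM_F$.
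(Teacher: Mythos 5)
Your proposal is correct and follows exactly the route the paper indicates: the paper derives the corollary directly from Theorem \ref{CML_Characterization}, observing that $C^{-1}$ is cyclic tri-diagonal iff it simultaneously has the $CM_L$ form \eqref{CML} and the $CM_F$ form \eqref{CMF}, which is precisely your sparsity-intersection argument. Your treatment merely spells out the block-index bookkeeping (that the off-tridiagonal supports intersect only in the $(0,N)$ and $(N,0)$ corners) that the paper leaves implicit.
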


A $CM_c$ dynamic model is as follows.

\begin{theorem}\label{CML_Dynamic_Forward_Proposition}
A ZMNG $[x_k]$ with covariance function $C_{l_1,l_2}$ is $CM_c$ iff it is governed by
\begin{align}
x_k=G_{k,k-1}x_{k-1}+G_{k,c}x_c+e_k, \quad k \in (0,N] \setminus \lbrace c \rbrace
\label{CML_Dynamic_Forward}
\end{align}
where $[e_k]$ is a zero-mean white Gaussian sequence with nonsingular covariances $G_k$, along with boundary condition\footnote{$e_0$ and $e_N$ in $\eqref{CML_Forward_BC1}$ are not necessarily the same as $e_0$ and $e_N$ in $\eqref{CML_Forward_BC2}$. Just for simplicity we use the same notation.}
\begin{align}
&x_0=e_0, \quad x_c=G_{c,0}x_0+e_c \, \, (\text{for} \,\, c=N)\label{CML_Forward_BC1}
\end{align}
or equivalently
\begin{align}
&x_c=e_c, \quad x_0=G_{0,c}x_c+e_0 \, \, (\text{for} \, \, c=N) \label{CML_Forward_BC2}
\end{align}

\end{theorem}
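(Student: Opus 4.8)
The plan is to prove both implications using Lemma \ref{CMc_CDF} together with the fact that, for a jointly zero-mean Gaussian collection, every conditional expectation is a linear function of the conditioning variables and the corresponding residual is Gaussian, independent of those variables. For necessity (the $CM_c$ property implies the model), I would specialize condition \eqref{CDF_1} to $j=k-1$, obtaining $F(\xi_k\mid[x_i]_0^{k-1},x_c)=F(\xi_k\mid x_{k-1},x_c)$. Equating conditional means and using zero-mean Gaussianity, $E[x_k\mid x_{k-1},x_c]$ is linear, so there exist matrices $G_{k,k-1},G_{k,c}$ with $E[x_k\mid[x_i]_0^{k-1},x_c]=G_{k,k-1}x_{k-1}+G_{k,c}x_c$. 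I then define $e_k:=x_k-G_{k,k-1}x_{k-1}-G_{k,c}x_c$ for $k\in(0,N]\setminus\{c\}$; as the residual of an orthogonal projection, $e_k$ is zero-mean Gaussian and independent of $[x_i]_0^{k-1}$ and $x_c$, which is exactly \eqref{CML_Dynamic_Forward}.

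Whiteness then follows from a nesting argument: for $m<k$, the residual $e_m$ is a linear function of $[x_i]_0^{m}$ and $x_c$, which lies inside the span of the variables $e_k$ is orthogonal to, so $E[e_k e_m']=0$; nonsingularity of $G_k:=\mathrm{Cov}(e_k)$ is inherited from nonsingularity of $C$, since a degenerate innovation would force a linear dependence among the $x_k$. The anchor variables are treated separately: $x_0$ has no past, so $x_0=e_0$, and when $c=N$ I regress $x_N$ on $x_0$ to get $x_N=G_{N,0}x_0+e_N$, giving \eqref{CML_Forward_BC1}. Inverting this linear-Gaussian relation between $x_0$ and $x_N$ produces the equivalent form \eqref{CML_Forward_BC2}, establishing the stated equivalence of boundary conditions.

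For sufficiency (the model implies $CM_c$), I would unroll the recursion: given $x_j$ and $x_c$, iterating \eqref{CML_Dynamic_Forward} writes each $x_k$ with $k>j$ as a fixed linear combination of $x_j$, $x_c$, and the intermediate noises $e_{j+1},\ldots,e_k$. Because $[e_k]$ is white and, by the anchor relations, the whole of $[x_i]_0^j$ and $x_c$ is a function of $e_0,\ldots,e_j$ (and $e_N$), these future noises are independent of $([x_i]_0^j,x_c)$. Hence the conditional law of $x_k$ given $([x_i]_0^j,x_c)$ coincides with its law given $(x_j,x_c)$, which is precisely \eqref{CDF_1}, so $[x_k]$ is $CM_c$ by Lemma \ref{CMc_CDF}. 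As an alternative I could assemble the recursion and anchor relations into $C^{-1}$ and verify directly that it has the $CM_c$ form, invoking Theorem \ref{CML_Characterization}.

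The main obstacle I anticipate is the bookkeeping around the anchor variables: I must confirm that the innovations defined by projection are genuinely white across the \emph{entire} index set, including across the anchor step (so that $e_m$ is independent of $x_c$ itself, not merely of its one-step predecessor), and that the joint Gaussian law of $(x_0,x_c)$ is captured equally by regressing $x_c$ on $x_0$ or $x_0$ on $x_c$. Verifying that the intermediate recursion together with either boundary choice reproduces exactly the same full covariance $C$ is where the delicate part of the argument lies.
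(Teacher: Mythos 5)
Your proof is correct, but note that this paper does not actually prove Theorem \ref{CML_Dynamic_Forward_Proposition}: it is stated in the Preliminaries as a reviewed result, cited from the authors' earlier work \cite{CM_Part_I}, so there is no in-paper argument to compare against line by line. Judged on its own, your innovations-style argument is sound in both directions: specializing the $CM_c$ conditional-CDF property to $j=k-1$ to identify the projection $E[x_k\mid [x_i]_0^{k-1},x_c]=G_{k,k-1}x_{k-1}+G_{k,c}x_c$, defining $e_k$ as the projection residual (so that it is orthogonal to the \emph{entire} past together with $x_c$, which is exactly what makes the nesting/whiteness argument go through, including uncorrelatedness with the boundary residuals $e_0$ and $e_N$), and, for sufficiency, unrolling the recursion and using that $\sigma([x_i]_0^j,x_c)\subset\sigma(e_0,\ldots,e_j,e_N)$ so the future noises are independent of the conditioning variables. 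You also correctly identify the two delicate points: whiteness across the anchor step (handled because $e_N$ lies in $\mathrm{span}(x_0,x_N)$ while each interior $e_k$ is orthogonal to that span) and the equivalence of the two boundary conditions, which is just the two factorizations of the same nonsingular joint Gaussian law of $(x_0,x_N)$. Two small points worth making explicit if you write this up fully: pairwise uncorrelatedness yields mutual independence only because all residuals are jointly Gaussian (being linear in $[x_k]$), and nonsingularity of $G_k$ follows from the standard fact that conditional covariances of a nonsingular Gaussian vector are Schur complements of $C\succ 0$. Within the machinery this paper itself sets up, the natural alternative route is the one you mention in passing: verify that the model produces, and can be recovered from, an inverse covariance $C^{-1}$ of the $CM_c$ form of Theorem \ref{CML_Characterization}; that algebraic factorization argument is the style of derivation the cited source follows, whereas your probabilistic projection argument is self-contained and arguably more transparent about where whiteness comes from.
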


The following theorem presents the reciprocal $CM_c$ models \cite{CM_Part_II_A_Conf}.
\begin{theorem}\label{CML_R_Dynamic_Forward_Proposition}
A ZMNG $[x_k]$ with covariance function $C_{l_1,l_2}$ is reciprocal iff it is governed by $\eqref{CML_Dynamic_Forward}$ along with $\eqref{CML_Forward_BC1}$ or $\eqref{CML_Forward_BC2}$, and 
\begin{align}
G_k^{-1}G_{k,c}=G_{k+1,k}'G_{k+1}^{-1}G_{k+1,c}
\label{CML_Condition_Reciprocal}
\end{align}
$\forall k \in (0,N-1)$ for $c=N$, and $\forall k \in (1,N)$ for $c=0$. Moreover, $[x_k]$ is Markov iff additionally we have, for $c=N$,
\begin{align}
G_N^{-1}G_{N,0}=G_{1,N}'G_{1}^{-1}G_{1,0}\label{CML_M_BC1}
\end{align}
for $\eqref{CML_Forward_BC1}$ or
\begin{align}
G_0^{-1}G_{0,N}=G_{1,0}'G_1^{-1}G_{1,N}\label{CML_M_BC2}
\end{align}
for $\eqref{CML_Forward_BC2}$; for $c=0$, we have 
\begin{align}
G_{N,0}=0\label{CMF_M}
\end{align}

\end{theorem}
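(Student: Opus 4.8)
The plan is to reduce everything to the precision matrix $C^{-1}$ and invoke the characterizations in Theorem~\ref{CML_Characterization}. Since a reciprocal sequence is in particular $CM_c$ (by Corollary~\ref{CMLCMF_Reciprocal} it is both $CM_L$ and $CM_F$), Theorem~\ref{CML_Dynamic_Forward_Proposition} already guarantees that a reciprocal ZMNG sequence is governed by \eqref{CML_Dynamic_Forward} with \eqref{CML_Forward_BC1} or \eqref{CML_Forward_BC2}; conversely, any sequence governed by the model is $CM_c$. Hence the whole theorem reduces to a single equivalence: given that $[x_k]$ is $CM_c$, it is reciprocal iff \eqref{CML_Condition_Reciprocal} holds, and Markov iff in addition the stated boundary condition holds. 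I will treat $c=N$ in detail and note that $c=0$ is parallel.

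First I would stack the sequence into $X=[x_0',\dots,x_N']'$ and rewrite the dynamic model together with its boundary condition as one linear system $\Gamma X=e$, where $e=[e_0',\dots,e_N']'$ is white with block-diagonal covariance $G=\mathrm{diag}(G_0,\dots,G_N)$. For $c=N$ with \eqref{CML_Forward_BC1}, the block matrix $\Gamma$ carries the identity on its diagonal, the block $-G_{k,k-1}$ on the first sub-diagonal, a block $-G_{k,N}$ in the last block-column for $1\le k\le N-1$, and the corner block $-G_{N,0}$ at position $(N,0)$. The model determines $X$ uniquely from $e$ (solve for $x_0$, then $x_N$, then the interior $x_k$ recursively), so $\Gamma$ is invertible; with $e$ nonsingular this gives $C=\Gamma^{-1}G(\Gamma')^{-1}$ and therefore $C^{-1}=\Gamma' G^{-1}\Gamma$. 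This factorization is the workhorse of the proof.

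Next I would compute the off-band blocks of $C^{-1}=\Gamma'G^{-1}\Gamma$ directly via $(C^{-1})_{ij}=\sum_k \Gamma_{ki}'G_k^{-1}\Gamma_{kj}$. For $c=N$ and $1\le i\le N-2$, only rows $k=i$ and $k=i+1$ contribute to the last block-column, yielding $(C^{-1})_{iN}=-G_i^{-1}G_{i,N}+G_{i+1,i}'G_{i+1}^{-1}G_{i+1,N}$, which is exactly the entry $D_i$ of the $CM_L$ template \eqref{CML}. By Theorem~\ref{CML_Characterization}(i) the $CM_L$ structure already holds, so by Theorem~\ref{CML_Characterization}(ii) the sequence is reciprocal iff $C^{-1}$ is cyclic tri-diagonal, i.e. iff all these blocks vanish, which is precisely \eqref{CML_Condition_Reciprocal} on $k\in(0,N-1)$. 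The Markov part then follows from Theorem~\ref{CML_Characterization}(iii): tri-diagonality additionally requires the corner block $(C^{-1})_{0N}$ to vanish; computing it from the three rows $k\in\{0,1,N\}$ meeting column $0$ gives $G_{1,0}'G_1^{-1}G_{1,N}-G_{N,0}'G_N^{-1}$, and setting it to zero, then transposing and using the symmetry of $G_1,G_N$, reproduces \eqref{CML_M_BC1} (and \eqref{CML_M_BC2} for boundary condition \eqref{CML_Forward_BC2}). The cases $c=0$ are analogous: the dense blocks sit in the first block-column of \eqref{CMF}, \eqref{CML_Condition_Reciprocal} clears the interior ones on $k\in(1,N)$, and the corner block reduces to $-G_{N,0}'G_N^{-1}$, so Markovianity forces $G_{N,0}=0$, which is \eqref{CMF_M}.

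I expect the main obstacle to be the bookkeeping in the block computation of $C^{-1}=\Gamma'G^{-1}\Gamma$: correctly determining which rows $k$ make both $\Gamma_{ki}$ and $\Gamma_{kj}$ nonzero, matching each resulting block to the corresponding $D_i$ in the $CM_c$ template \eqref{CML}/\eqref{CMF}, and carrying the transposes consistently (using that each $G_k$ is symmetric) so that the derived identities emerge in the exact form \eqref{CML_Condition_Reciprocal}, \eqref{CML_M_BC1}, \eqref{CML_M_BC2}, and \eqref{CMF_M}. Dispatching the two boundary variants and the two values of $c$ without repeating the full computation each time — ideally through a symmetry remark exploiting the first/last duality noted after Remark~\ref{R_CMN} — is the other place where care is needed.
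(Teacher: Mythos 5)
The paper states this theorem without proof --- it appears in the preliminaries as a quoted result from \cite{CM_Part_II_A_Conf} --- so there is no in-paper argument to compare against line by line; judged on its own merits, your proof is correct, and it is exactly the route the paper's setup invites: Theorem \ref{CML_Characterization} is stated immediately beforehand as the tool, and the factorization $C^{-1}=\Gamma'G^{-1}\Gamma$ (with $\Gamma$ invertible because the model determines $x_0$, then $x_N$, then the interior states recursively, and $G=\mathrm{diag}(G_0,\dots,G_N)\succ 0$) is the standard device in the cited companion work. Your block computations check out: for $c=N$, rows $i$ and $i+1$ give $(C^{-1})_{iN}=-G_i^{-1}G_{i,N}+G_{i+1,i}'G_{i+1}^{-1}G_{i+1,N}$ for $i\in(0,N-1)$, which are precisely the $D_i$ blocks of \eqref{CML} whose vanishing (note cyclic tri-diagonality still permits the corner $D_0$, matching the range in \eqref{CML_Condition_Reciprocal}) characterizes reciprocity via Theorem \ref{CML_Characterization}(ii); the corner blocks $G_{1,0}'G_1^{-1}G_{1,N}-G_{N,0}'G_N^{-1}$ under \eqref{CML_Forward_BC1}, $-G_0^{-1}G_{0,N}+G_{1,0}'G_1^{-1}G_{1,N}$ under \eqref{CML_Forward_BC2}, and $-G_{N,0}'G_N^{-1}$ for $c=0$ reproduce \eqref{CML_M_BC1}, \eqref{CML_M_BC2}, and \eqref{CMF_M} after transposition, exactly as you say. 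Your reduction of the theorem to a single equivalence is also legitimate: in the necessity direction you correctly route through Corollary \ref{CMLCMF_Reciprocal} and Theorem \ref{CML_Dynamic_Forward_Proposition} to obtain a governing model before reading conditions off $C^{-1}$, and since any model with nonsingular white noise determines $C$ uniquely, the factorization applies to whatever parameters that model has.

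Two small points of care, both of which your computations already respect implicitly but your prose should state: under \eqref{CML_Forward_BC1} with $c=N$ the last block row of $\Gamma$ carries no subdiagonal block $-G_{N,N-1}$ (the model \eqref{CML_Dynamic_Forward} excludes $k=N$), so your blanket description ``$-G_{k,k-1}$ on the first sub-diagonal'' should be restricted to $k\in(0,N)$ --- otherwise the $(N-1,N)$ block of $C^{-1}$ would acquire a spurious term; and for $c=0$ the model runs over $k\in(0,N]$, so row $N$ \emph{does} carry both $-G_{N,N-1}$ and $-G_{N,0}$, which is what makes the $(0,N-1)$ entry close under \eqref{CML_Condition_Reciprocal} while leaving $-G_{N,0}'G_N^{-1}$ as the lone corner term. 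It is also worth noting that the proof the paper \emph{does} include for the neighboring Theorem \ref{CML_R_Dynamic_FQ_Proposition} takes a different, constructive flavor (inducing the $CM_L$ model from a Markov model via conditional densities); your precision-matrix argument is the complementary, characterization-based route, and it handles all four boundary/endpoint variants uniformly, which is its main advantage.
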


\section{A Dynamic Model of Reciprocal Sequences}\label{Section_Models}

Based on Theorem \ref{CML_R_Dynamic_Forward_Proposition}, we can determine whether a given $CM_c$ model governs a reciprocal sequence or not. Also, Theorem \ref{CML_R_Dynamic_Forward_Proposition} gives the required conditions on the parameters of a reciprocal $CM_c$ model for their design. Theorem \ref{CML_R_Dynamic_FQ_Proposition} below provides an approach to parameter design of the reciprocal $CM_L$ model. We have a similar result for a backward $CM_L$ model (which is useful for the $CM_F$ sequence, because a $CM_F$ sequence is backward $CM_L$).

\begin{theorem}\label{CML_R_Dynamic_FQ_Proposition} 
 A ZMNG sequence $[x_k]$ with covariance function $C_{l_1,l_2}$ is reciprocal iff it is governed by $\eqref{CML_Dynamic_Forward}$ (for $c=N$) along with $\eqref{CML_Forward_BC1}$ or $\eqref{CML_Forward_BC2}$, where $(G_{k,k-1},G_{k,N},G_k)$, $k \in (0,N)$, are given by
\begin{align}
G_{k,k-1}&=M_{k,k-1}-G_{k,N}M_{N|k-1} \label{CML_Choice_1}\\
G_{k,N}&=G_kM_{N|k}'C_{N|k}^{-1} \label{CML_Choice_2}\\
G_k&=(M_{k}^{-1}+M_{N|k}'C_{N|k}^{-1}M_{N|k})^{-1}\label{CML_Choice_3}\\
M_{N|k}&=M_{N,N-1}\cdots M_{k+1,k}, \quad k \in (0,N)\nonumber
\end{align}
and $M_{N|N}=I$. Also,
\begin{align*}
C_{N|k}=\sum _{n=k}^{N-1}M_{N|n+1}M_{n+1}M_{N|n+1}', \quad k \in (0,N)
\end{align*}
where $M_{k,k-1}$, $ k \in (0,N]$, are square matrices, and $M_k$, $k \in (0,N]$, are positive definite of the dimension of $x_k$.

\end{theorem}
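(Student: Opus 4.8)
The plan is to read the matrices $M_{k,k-1}$ and $M_k$ as the transition matrices and driving-noise covariances of an underlying Markov model $x_k=M_{k,k-1}x_{k-1}+\epsilon_k$ with $\mathrm{Cov}(\epsilon_k)=M_k$. Then $M_{N|k}$ is the $k\!\to\!N$ transition and $C_{N|k}=\mathrm{Cov}(x_N\mid x_k)$ of that model, so that the formulas \eqref{CML_Choice_1}--\eqref{CML_Choice_3} are precisely the parameters of the bridge conditional $p(x_k\mid x_{k-1},x_N)$ obtained by multiplying the Gaussians $p(x_k\mid x_{k-1})$ and $p(x_N\mid x_k)$ and completing the square. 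Since reciprocity of a $CM_L$ model is characterized by condition \eqref{CML_Condition_Reciprocal} in Theorem \ref{CML_R_Dynamic_Forward_Proposition}, I would prove the statement by showing that the parametrization \eqref{CML_Choice_1}--\eqref{CML_Choice_3} is equivalent to \eqref{CML_Condition_Reciprocal} holding for a $CM_L$ model of the form \eqref{CML_Dynamic_Forward}, which is available because every reciprocal sequence is $CM_L$ (Corollary \ref{CMLCMF_Reciprocal}) and hence admits such a model (Theorem \ref{CML_Dynamic_Forward_Proposition}).

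For the ``if'' direction I would assume \eqref{CML_Choice_1}--\eqref{CML_Choice_3} and verify \eqref{CML_Condition_Reciprocal}. First, \eqref{CML_Choice_2} gives immediately $G_k^{-1}G_{k,N}=M_{N|k}'C_{N|k}^{-1}$, the left-hand side of \eqref{CML_Condition_Reciprocal}. Next I would rewrite \eqref{CML_Choice_1} in the equivalent product form $G_{k,k-1}=G_kM_k^{-1}M_{k,k-1}$, using the identity $(P+Q)^{-1}P=I-(P+Q)^{-1}Q$ with $P=M_k^{-1}$ and $Q=M_{N|k}'C_{N|k}^{-1}M_{N|k}$ together with $M_{N|k}M_{k,k-1}=M_{N|k-1}$; this yields $G_{k+1,k}'=M_{k+1,k}'M_{k+1}^{-1}G_{k+1}$. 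Substituting into the right-hand side of \eqref{CML_Condition_Reciprocal} and using the one-step recursion $C_{N|k}=C_{N|k+1}+M_{N|k+1}M_{k+1}M_{N|k+1}'$ together with the Woodbury identity applied to $(C_{N|k+1}+M_{N|k+1}M_{k+1}M_{N|k+1}')^{-1}$, the right-hand side collapses to $M_{N|k}'C_{N|k}^{-1}$, matching the left-hand side. Hence \eqref{CML_Condition_Reciprocal} holds and $[x_k]$ is reciprocal by Theorem \ref{CML_R_Dynamic_Forward_Proposition}.

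For the ``only if'' direction I would start from a reciprocal $[x_k]$, which by Corollary \ref{CMLCMF_Reciprocal} and Theorem \ref{CML_Dynamic_Forward_Proposition} is $CM_L$ with parameters $(G_{k,k-1},G_{k,N},G_k)$ satisfying \eqref{CML_Condition_Reciprocal}, and then construct admissible $M_{k,k-1},M_k$ by inverting the formulas through a backward sweep. I would fix a free positive definite $M_N$ and set $M_{N,N-1}$ from the $k=N-1$ instance of \eqref{CML_Choice_2}; then, for $k=N-1,\dots,1$, define $M_{k,k-1}=(I-G_{k,N}M_{N|k})^{-1}G_{k,k-1}$ from \eqref{CML_Choice_1} and $M_k^{-1}=G_k^{-1}-M_{N|k}'C_{N|k}^{-1}M_{N|k}$ from \eqref{CML_Choice_3}, updating $M_{N|k}=M_{N|k+1}M_{k+1,k}$ and $C_{N|k}=C_{N|k+1}+M_{N|k+1}M_{k+1}M_{N|k+1}'$ at each step. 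It remains to check that \eqref{CML_Choice_2} is then satisfied for all interior $k$; I would prove this by downward induction, using the same Woodbury identity to obtain $M_{N|k}'C_{N|k}^{-1}=M_{k+1,k}'M_{k+1}^{-1}G_{k+1}M_{N|k+1}'C_{N|k+1}^{-1}$ and then invoking \eqref{CML_Condition_Reciprocal} to identify this expression with $G_k^{-1}G_{k,N}$.

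The main obstacle is the well-posedness of the ``only if'' construction: I must show that $I-G_{k,N}M_{N|k}$ is invertible and, more importantly, that each $M_k$ produced by $M_k^{-1}=G_k^{-1}-M_{N|k}'C_{N|k}^{-1}M_{N|k}$ is symmetric and positive definite, so that it is a legitimate noise covariance. Both should follow from the nonsingularity (positive definiteness) of the covariance $C$ of the reciprocal sequence together with \eqref{CML_Condition_Reciprocal}: once consistency of \eqref{CML_Choice_2} is established, $M_k^{-1}=G_k^{-1}(I-G_{k,N}M_{N|k})$, and the reciprocal structure is exactly what forces this to be symmetric and positive definite. Verifying this positivity, rather than the algebraic identities (which are routine once the Woodbury identity is in hand), is where the real work lies.
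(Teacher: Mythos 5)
Your ``if'' direction is sound, though it travels a different road than the paper: the paper obtains \eqref{CML_Choice_1}--\eqref{CML_Choice_3} probabilistically, as the parameters of the Gaussian bridge $p(y_k|y_{k-1},y_N)=p(y_k|y_{k-1})p(y_N|y_k)/p(y_N|y_{k-1})$ of an auxiliary Markov sequence $[y_k]$ governed by \eqref{1Order_Markov}, and then transfers the interior evolution law to $[x_k]$ with an arbitrary boundary condition, so the reciprocity condition \eqref{CML_Condition_Reciprocal} (which involves only interior parameters) is inherited from the Markov sequence for free; you instead verify \eqref{CML_Condition_Reciprocal} by direct algebra. Your algebra checks out: \eqref{CML_Choice_2}--\eqref{CML_Choice_3} give $G_k^{-1}G_{k,N}=M_{N|k}'C_{N|k}^{-1}$ and $G_{k+1,k}=G_{k+1}M_{k+1}^{-1}M_{k+1,k}$, and the matrix-inversion-lemma step indeed yields $M_{k+1}^{-1}G_{k+1}M_{N|k+1}'C_{N|k+1}^{-1}=M_{N|k+1}'C_{N|k}^{-1}$ via $C_{N|k}=C_{N|k+1}+M_{N|k+1}M_{k+1}M_{N|k+1}'$, so the right side of \eqref{CML_Condition_Reciprocal} collapses to $M_{N|k}'C_{N|k}^{-1}$ as claimed.

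The genuine gap is in your ``only if'' construction, and it is worse than the loose end you flag: you propose to ``fix a free positive definite $M_N$'' and assert that positive definiteness of each $M_k^{-1}=G_k^{-1}-M_{N|k}'C_{N|k}^{-1}M_{N|k}$ ``should follow'' from nonsingularity of $C$ together with \eqref{CML_Condition_Reciprocal}. That is false for an arbitrary $M_N$: positivity is a constraint on the admissible choice of $M_N$, not a consequence of reciprocity. A scalar counterexample with $N=2$ (where every ZMNG sequence is vacuously reciprocal, since \eqref{CML_Condition_Reciprocal} is vacuous) makes this concrete: the sweep forces $M_{2,1}=m_2\,g_{12}/g_1$ (since $C_{2|1}=M_2$) and then $M_1^{-1}=g_1^{-1}-g_{12}^2\,m_2/g_1^2$, which is positive iff $m_2<g_1/g_{12}^2$; any larger $m_2$ breaks the construction even though the sequence is reciprocal with $C>0$. (Symmetry, by contrast, is automatic in your sweep, so definiteness is the only issue.) Hence your necessity argument needs a supplementary existence proof --- e.g., a downward induction showing that for $M_N$ sufficiently small every $M_k^{-1}$ remains in a neighborhood of $G_k^{-1}\succ 0$, since $M_{N|k}'C_{N|k}^{-1}M_{N|k}\to 0$ as $M_N\to 0$ --- or one must sidestep the sweep entirely, as the paper does by constructing the Markov sequence and reading off the bridge parameters (with the full necessity details deferred to \cite{CM_Part_II_B}). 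Your consistency induction for \eqref{CML_Choice_2} via \eqref{CML_Condition_Reciprocal} is fine as algebra, but it presupposes at each step exactly the invertibility and positivity that remain unproved.
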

\begin{proof}
We only show how $\eqref{CML_Choice_1}$--$\eqref{CML_Choice_3}$ are obtained. More detail can be found in \cite{CM_Part_II_B}.

Given the square matrices $M_{k,k-1}$ and the positive definite matrices $M_{k}, k \in (0,N]$, there exists a ZMNG Markov sequence $[y_k]$ governed by 
\begin{align}\label{1Order_Markov}
y_k=M_{k,k-1}y_{k-1}+e^M_{k}, \quad k \in (0,N]
\end{align}
where $[e^M_k]_1^{N}$ is a zero-mean white Gaussian sequence with nonsingular covariances $M_k$, uncorrelated with $y_0$ with nonsingular covariance $C^y_0$. 

Since every Markov sequence is $CM_L$, one can obtain the following model for $[y_k]$
\begin{align}
y_k=G_{k,k-1}y_{k-1}+G_{k,N}y_N+e^y_k, \quad k \in (0,N)\label{CML_Dynamic_for_Markov}
\end{align}
where $[e^y_k]$ is a zero-mean white Gaussian sequence with nonsingular covariances $G_k$, and boundary condition
\begin{align}
y_N&=e^{y}_N, \quad y_0=G^{y}_{0,N}y_N+e^{y}_0\label{CML_R_FQ_BC2}
\end{align}

Parameters of model $\eqref{CML_Dynamic_for_Markov}$ can be obtained as follows. Since
\begin{align*}
p(y_k|y_{k-1})=\mathcal{N}(y_k;M_{k,k-1}y_{k-1},M_{k})
\end{align*}
by the Markov property, we have
\begin{align}
p(y_k|y_{k-1},y_N)&=\frac{p(y_k|y_{k-1})p(y_N|y_k)}{p(y_N|y_{k-1})}\label{CML_Reciprocal_Transition}\\
&=\mathcal{N}(y_k;G_{k,k-1}y_{k-1}+G_{k,N}y_N;G_k)\nonumber
\end{align}
$\forall k \in (0,N)$, and $G_{k,k-1}$, $G_{k,N}$, and $G_k$ are calculated by $\eqref{CML_Choice_1}$--$\eqref{CML_Choice_3}$, where for $i \in [0,N)$ we have
\begin{align*}
p(y_N|y_i)=\mathcal{N}(y_N;M_{N|i}y_{i},C_{N|i})
\end{align*}

Now, we construct a sequence $[x_k]$ governed by the same model $\eqref{CML_Dynamic_for_Markov}$ as
\begin{align}
x_k=G_{k,k-1}x_{k-1}+G_{k,N}x_N+e_k, \quad k \in (0,N)\label{CML_Dynamic_for_Markov_x}
\end{align}
and boundary condition
\begin{align}
x_N&=e_N, \quad x_0=G_{0,N}x_N+e_0\label{CML_R_FQ_BC2_x}
\end{align}
Note that parameters of $\eqref{CML_Dynamic_for_Markov}$ and $\eqref{CML_Dynamic_for_Markov_x}$ are the same (i.e., $G_{k,k-1}, G_{k,N}, G_k, k \in (0,N)$), but parameters of $\eqref{CML_R_FQ_BC2}$ (i.e., $G^y_{0,N}, G^y_0,G^y_N$) and $\eqref{CML_R_FQ_BC2_x}$ (i.e., $G_{0,N}, G_0,G_N$) are different. By Theorem \ref{CML_Dynamic_Forward_Proposition}, $[x_k]$ is a ZMNG $CM_L$ sequence. Then, based on the above results, necessity and sufficiency of the theorem are proved \cite{CM_Part_II_B}.
\end{proof}

By Theorem \ref{CML_R_Dynamic_FQ_Proposition}, we can first design $M_{k,k-1}$ and $M_k$, $k \in (0,N]$, and then parameters of the reciprocal $CM_L$ model. This approach is useful because design of the parameters of a Markov model is much easier (than of a reciprocal $CM_L$ model) and $M_{k,k-1}$ and $M_k$ can be seen as parameters of a Markov model. It is explained for the problem of motion trajectory modeling with destination information as follows. Let $[y_k]$ be a Markov sequence governed by $\eqref{1Order_Markov}$ (for example, a nearly constant velocity motion model without considering destination information) with parameters $(M_{k,k-1},M_{k}), k \in (0,N]$ (and the initial covariance $C^y_0$), and $[x_k]$ be a $CM_L$ sequence governed by $\eqref{CML_Dynamic_Forward}$ with parameters $(G_{k,k-1},G_{k,N},G_k)$, $k \in (0,N)$ given by $\eqref{CML_Choice_1}$--$\eqref{CML_Choice_3}$ (and any boundary condition $(G_0,G_{N,0},G_N)$ or $(G_N,G_{0,N},G_0)$). It can be seen that $[x_k]$ and $[y_k]$ have the same $CM_L$ evolution law ($\eqref{CML_Dynamic_for_Markov}$ and $\eqref{CML_Dynamic_for_Markov_x}$) while $[x_k]$ can have any joint endpoint distribution. Therefore, $[x_k]$ can model trajectories with any origin and destination distributions, where parameters of its $CM_L$ model are designed based on those of the Markov model. In the above example, on the one hand it is assumed that the moving object follows the Markov model, and on the other hand it is assumed that the destination (distribution) is known (this is the case in a real problem of trajectory modeling with destination information). Considering these two assumptions (i.e., motion Markov model and destination), the corresponding $CM_L$ model (with $\eqref{CML_Choice_1}$--$\eqref{CML_Choice_3}$) describes the behavior of the moving object satisfying both the above assumptions (also, see $\eqref{CML_Reciprocal_Transition}$). 

The dynamic model $\eqref{CML_Dynamic_for_Markov_x}$ is called a $CM_L$ model \textit{induced} by a Markov model. By Theorem \ref{CML_R_Dynamic_FQ_Proposition}, every $CM_L$ model induced by a Markov model is a reciprocal $CM_L$ model. Also, every reciprocal $CM_L$ model can be induced by a Markov model.

\section{Representations of CM and Reciprocal Sequences}\label{Section_CM_Markov}

A representation of continuous time Gaussian CM processes by a Wiener process and an uncorrelated Gaussian vector was presented in \cite{ABRAHAM}. Inspired by \cite{ABRAHAM}, we show how a NG CM sequence can be represented by a NG Markov sequence and an uncorrelated NG vector. Also, it is shown how to use a NG Markov sequence and an uncorrelated NG vector to construct a NG CM sequence. This representation not only gives more insight into the CM sequence, but also provides an approach for designing CM sequences in application. 

\begin{proposition}\label{CML_Markov_z_Proposition}
A ZMNG $[x_k]$ is $CM_c$ iff it can be represented as 
\begin{align}\label{CML_Markov_z}
x_k=y_k+ \Gamma _k x_c, \quad k \in [0,N] \setminus \lbrace c \rbrace
\end{align}
where $[y_k]_{[0,N] \setminus \lbrace c \rbrace}$ is a ZMNG Markov sequence, $x_c$ is a ZMNG vector uncorrelated with $[y_k]_{[0,N] \setminus \lbrace c \rbrace}$, and $\Gamma_k$ are some matrices. 

\end{proposition}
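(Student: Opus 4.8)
The plan is to treat the forward case $c=N$ (the $CM_L$ case); the backward case $c=0$ follows by the symmetry between the two directions noted after Remark \ref{R_CMN}. For a ZMNG sequence the natural candidate is obtained by stripping from each $x_k$ its best linear estimate given the conditioning variable $x_N$. Concretely, I would set
\begin{align*}
\Gamma_k = C_{k,N}C_{N,N}^{-1}, \qquad y_k = x_k - \Gamma_k x_N, \quad k \in [0,N),
\end{align*}
so that $y_k$ is the residual of $x_k$ after projection onto $x_N$. With this choice $E[y_k x_N'] = C_{k,N} - C_{k,N}C_{N,N}^{-1}C_{N,N} = 0$, so each $y_k$ is uncorrelated with $x_N$; since everything is jointly Gaussian, $[y_k]_0^{N-1}$ is in fact independent of $x_N$. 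The whole argument then reduces to showing that, under the $CM_L$ hypothesis, this particular $[y_k]_0^{N-1}$ is Markov, and conversely that such a representation forces the $CM_L$ property.

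For necessity I would compute the covariance of the residual sequence directly. A short calculation gives
\begin{align*}
\mathrm{Cov}(y_k,y_j) = C_{k,j} - C_{k,N}C_{N,N}^{-1}C_{N,j}, \quad k,j \in [0,N),
\end{align*}
which is exactly the $(k,j)$ block of the conditional covariance $\mathrm{Cov}([x_k]_0^{N-1}\mid x_N)$. Thus the joint covariance of $[y_k]_0^{N-1}$ coincides with the conditional covariance of $[x_k]_0^{N-1}$ given $x_N$. Because $[x_k]$ is $CM_L$, conditioned on $x_N$ the subsequence $[x_k]_0^{N-1}$ is Markov, so applying Theorem \ref{CML_Characterization}(iii) to this (Gaussian) conditional law shows that the conditional covariance has a tri-diagonal inverse. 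Hence the inverse of the covariance of $[y_k]_0^{N-1}$ is tri-diagonal, and Theorem \ref{CML_Characterization}(iii) applied once more, now unconditionally, yields that $[y_k]_0^{N-1}$ is a ZMNG Markov sequence. Together with the independence above, this produces the representation \eqref{CML_Markov_z}.

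For sufficiency I would start from a representation $x_k = y_k + \Gamma_k x_N$ with $[y_k]_0^{N-1}$ Markov and independent of $x_N$, and verify the defining $CM_L$ property. Conditioning on $x_N=\xi$ leaves the law of $[y_k]_0^{N-1}$ unchanged by independence and turns the representation into $x_k = y_k + \Gamma_k \xi$, i.e.\ a deterministic shift of the Markov sequence $[y_k]_0^{N-1}$. A deterministic (here $\xi$-dependent) translation preserves the Markov property (Lemma \ref{CMc_CDF}), so conditioned on $x_N$ the sequence $[x_k]_0^{N-1}$ is Markov, which is precisely the $CM_L$ property. Nonsingularity of $[x_k]$ follows because the linear map $([y_k]_0^{N-1},x_N)\mapsto [x_k]_0^{N}$ defined by the representation is a bijection (triangular with identity diagonal blocks, hence of nonzero determinant), so it carries the nonsingular joint Gaussian law of the independent pair to a nonsingular law on $[x_k]$.

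The step I expect to be the main obstacle is the necessity direction, specifically the passage from the \emph{conditional} Markov structure of $[x_k]_0^{N-1}$ given $x_N$ to the \emph{unconditional} Markov structure of the residual sequence $[y_k]_0^{N-1}$. The covariance identity matching $\mathrm{Cov}([y_k])$ with the conditional covariance is routine, but its conceptual content is that projecting out the single variable $x_N$ from every $x_k$ is exactly what annihilates the extra border entries ($D_k$) distinguishing a $CM_L$ inverse-covariance \eqref{CML} from a tri-diagonal one \eqref{Tridiagonal}; making this rigorous is cleanest through the inverse-covariance characterizations of Theorem \ref{CML_Characterization} rather than through direct manipulation of conditional CDFs.
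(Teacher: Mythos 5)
Your proof is correct, and it takes a genuinely different route from the paper's. The paper's own proof establishes only the construction (sufficiency) direction, and does so through the dynamic-model machinery: it writes the Markov model $y_k=M_{k,k-1}y_{k-1}+e^M_k$, substitutes $y_k=x_k-\Gamma_k x_N$ to exhibit $[x_k]$ as governed by the $CM_L$ model $\eqref{CML_Dynamic_Forward}$ with $G_{k,k-1}=M_{k,k-1}$, $G_{k,N}=\Gamma_k-M_{k,k-1}\Gamma_{k-1}$, $e_k=e^M_k$ and boundary condition $\eqref{CML_Forward_BC2}$, then invokes Theorem \ref{CML_Dynamic_Forward_Proposition}; the remaining details, including necessity, are deferred to \cite{CM_Part_II_B}. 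You instead argue entirely at the level of covariances and conditional laws: for necessity you project out $x_N$ (taking $\Gamma_k=C_{k,N}C_{N,N}^{-1}$), identify $\mathrm{Cov}\bigl([y_k]_0^{N-1}\bigr)$ with the Schur complement, i.e.\ the conditional covariance of $[x_k]_0^{N-1}$ given $x_N$ (nonsingular because $C$ is positive definite), and apply Theorem \ref{CML_Characterization}(iii) twice --- once to the conditional law and once unconditionally, which is legitimate since Gaussian Markovness is a conditional-independence property depending only on the covariance, not the mean; for sufficiency you observe that conditioning on $x_N=\xi$ turns the representation into a deterministic, $\xi$-dependent translation of the Markov sequence $[y_k]_0^{N-1}$, which preserves the Markov property, and you handle nonsingularity by the invertible triangular change of variables. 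Your route buys a self-contained proof of both directions of the equivalence, whereas the paper's route buys the explicit parameter identifications that the design discussions in Sections \ref{Section_Models} and \ref{Section_CM_Markov} rely on. One small point worth making explicit in your sufficiency step (shared implicitly by the paper): the hypothesis that $x_c$ is ``uncorrelated'' with $[y_k]$ must be read as joint Gaussianity of the pair (hence independence), as you in effect assume; without joint Gaussianity, $[x_k]$ need not be Gaussian and the conditional law of $[y_k]$ given $x_N$ need not coincide with its marginal.
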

\begin{proof}
We only show that given a ZMNG Markov sequence $[y_k]_0^{N-1}$ uncorrelated with a ZMNG vector $x_N$, $[x_k]$ constructed by $x_k=y_k+\Gamma _kx_N, k \in [0,N)$ is a ZMNG $CM_L$ sequence, where $\Gamma_k$ are some matrices. More detail can be found in \cite{CM_Part_II_B}. It suffices to show that $[x_k]$ is governed by $\eqref{CML_Dynamic_Forward}$ and $\eqref{CML_Forward_BC2}$ ($c=N$).

Since $[y_k]_0^{N-1}$ is a ZMNG Markov sequence, we have
\begin{align}
y_k=M_{k,k-1}y_{k-1}+e^M_{k}, \quad k \in (0,N)\label{Markov_Model}
\end{align}
where $[e^M_{k}]_1^{N-1}$ is a zero-mean white Gaussian sequence with nonsingular covariances $M_{k}$, uncorrelated with $y_0$ with nonsingular covariance $C^y_0$. We have
\begin{align*}
x_0=y_0+\Gamma _0x_N
\end{align*}  
So, we consider $G_{0,N}=\Gamma _0$ and $e_0=y_0$. Then, for $k \in (0,N)$, we have
\begin{align*}
x_k&=y_k+\Gamma _kx_N\\
&=M_{k,k-1}y_{k-1}+e^M_{k}+\Gamma _kx_N\\
&=M_{k,k-1}x_{k-1}+(\Gamma _k-M_{k,k-1}\Gamma _{k-1})x_N+e^M_{k}
\end{align*}
Thus, for $k \in (0,N)$, we consider
\begin{align*}
&G_{k,k-1}=M_{k,k-1}\\
&G_{k,N}=\Gamma _k-M_{k,k-1}\Gamma _{k-1}\\
&e_k=e^M_{k}
\end{align*}
Nonsingularity of $[x_k]$ can be proved based on the nonsingularity of $[y_k]_0^{N-1}$ and $x_N$.
\end{proof}

Proposition \ref{CML_Markov_z_Proposition} makes a key concept behind the $CM_c$ sequence clear. Also, it provides an approach for design of $CM_c$ models in application. Below we explain the idea for designing a $CM_L$ model for motion trajectory modeling with destination information. The $CM_L$ model is more general than the reciprocal $CM_L$ model. Consequently, the following approach for parameter design is more general and flexible than that of Theorem \ref{CML_R_Dynamic_FQ_Proposition}. For example, it can incorporate other available information in the problem. The approach is as follows. First, a Markov model (e.g., a nearly constant velocity model) with the given origin distribution (without considering other information) is considered. The sequence governed by this model is $[y_k]_0^{N-1}$ in $\eqref{CML_Markov_z}$ (let $c=N$)). Assuming the destination, distribution of $x_N$, is known. Then, the matrices $\Gamma _k$ are designed according to the information available in the problem. In other words, based on $\Gamma _k$, the Markov sequence $[y_k]_0^{N-1}$ is modified to satisfy the available information leading to the desired trajectories which end up at the destination. A direct attempt for designing parameters of a $CM_L$ model for this problem is hard. However, the above approach based on Proposition \ref{CML_Markov_z_Proposition} provides some guidelines which make parameter design easier and intuitive.

A representation of the reciprocal sequence is as follows.

\begin{proposition}\label{Reciprocal_Markov_z}
A ZMNG $[x_k]$ is reciprocal iff it can be represented as both
\begin{align}
x_k&=y^L_k+\Gamma ^L_kx_N, \quad k \in [0,N) \label{CML_R}\\
x_k&=y^F_k+\Gamma ^F_kx_0, \quad  k \in (0,N]\label{CMF_R}
\end{align}
where $[y^L_k]_0^{N-1}$ and $[y^F_k]_1^N$ are ZMNG Markov sequences, $x_N$ and $x_0$ are ZMNG vectors uncorrelated with $[y^L_k]_0^{N-1}$ and $[y^F_k]_1^N$, respectively, and $\Gamma ^L_k$ and $\Gamma ^F_k$ are some matrices. 

\end{proposition}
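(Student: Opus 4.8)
The plan is to prove the reciprocal representation by exploiting Corollary~\ref{CMLCMF_Reciprocal}, which states that a NG sequence is reciprocal iff it is both $CM_L$ and $CM_F$. Since Proposition~\ref{CML_Markov_z_Proposition} already characterizes $CM_c$ sequences by the representation $\eqref{CML_Markov_z}$ (for both $c=N$ and $c=0$), the reciprocal representation should follow almost immediately by instantiating that proposition twice. First I would establish the forward direction: assuming $[x_k]$ is reciprocal, Corollary~\ref{CMLCMF_Reciprocal} gives that $[x_k]$ is $CM_L$, so Proposition~\ref{CML_Markov_z_Proposition} with $c=N$ yields the representation $\eqref{CML_R}$ with a ZMNG Markov sequence $[y^L_k]_0^{N-1}$ uncorrelated with $x_N$; simultaneously $[x_k]$ is $CM_F$, so Proposition~\ref{CML_Markov_z_Proposition} with $c=0$ yields $\eqref{CMF_R}$ with a ZMNG Markov sequence $[y^F_k]_1^N$ uncorrelated with $x_0$. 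This direction is essentially a citation of the two already-proved cases.

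For the converse, I would assume $[x_k]$ admits both representations $\eqref{CML_R}$ and $\eqref{CMF_R}$ and show $[x_k]$ is reciprocal. The representation $\eqref{CML_R}$ (with Markov $[y^L_k]$ uncorrelated with $x_N$) is exactly the construction side of Proposition~\ref{CML_Markov_z_Proposition} for $c=N$, so it forces $[x_k]$ to be $CM_L$; likewise $\eqref{CMF_R}$ forces $[x_k]$ to be $CM_F$. By Corollary~\ref{CMLCMF_Reciprocal}, being simultaneously $CM_L$ and $CM_F$ is equivalent to being reciprocal, which closes the converse. Thus the entire argument reduces to applying Proposition~\ref{CML_Markov_z_Proposition} in both directions and in both index-orientations, then invoking the corollary.

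The one genuine subtlety, and the step I expect to require the most care, is the consistency between the two representations: for a single reciprocal $[x_k]$ the \emph{same} sequence must decompose both ways simultaneously, so I must be sure the two invocations of Proposition~\ref{CML_Markov_z_Proposition} are applied to one fixed covariance structure rather than producing two unrelated sequences. In the forward direction this is automatic since both $CM_L$ and $CM_F$ properties hold for the given $[x_k]$; in the converse I must verify that the existence of \emph{both} decompositions is what is assumed (the statement uses ``both''), so that I may independently extract $CM_L$-ness from $\eqref{CML_R}$ and $CM_F$-ness from $\eqref{CMF_R}$ without needing to relate $[y^L_k]$ to $[y^F_k]$. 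Provided this is handled cleanly, no new computation beyond what is already in the proof of Proposition~\ref{CML_Markov_z_Proposition} is needed, and I would simply remark that full details appear in \cite{CM_Part_II_B}.
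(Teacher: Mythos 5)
Your proposal is correct and matches the paper's intended argument: the paper states Proposition~\ref{Reciprocal_Markov_z} immediately after Proposition~\ref{CML_Markov_z_Proposition} precisely so that it follows by applying that representation result for both $c=N$ and $c=0$ and invoking Corollary~\ref{CMLCMF_Reciprocal} (reciprocal iff both $CM_L$ and $CM_F$), with full details deferred to \cite{CM_Part_II_B}. Your observation that the two decompositions in the converse can be used independently, without relating $[y^L_k]$ to $[y^F_k]$, is exactly the right way to handle the only delicate point.
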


\section{Summary and Conclusions}\label{Section_Summary_Conclusions}

An approach has been presented for designing parameters of reciprocal $CM_L$ dynamic models. This approach makes the nonsingular Gaussian (NG) reciprocal sequence more easily applicable. The approach is based on inducing a $CM_L$ model from a Markov model. It was shown that every $CM_L$ model induced by a Markov model is a reciprocal $CM_L$ model. Also, every reciprocal $CM_L$ model can be induced by a Markov model.

It has been shown how a NG CM (or reciprocal) sequence can be represented as a sum of a NG Markov sequence and an uncorrelated NG vector. In addition, construction of a NG CM sequence based on a NG Markov sequence and an uncorrelated NG vector is discussed. This representation and construction can be used for designing parameters of the CM dynamic models in application (based on designing parameters of a NG Markov model and an uncorrelated NG vector). The results of this paper lead to more insight into CM and reciprocal sequences, provide some tools for application of CM (and reciprocal) sequences, and demonstrate the benefits of studying the reciprocal sequence from the viewpoint of the CM sequence.

\subsubsection*{Acknowledgments}

Research was supported by NASA Phase03-06 through grant NNX13AD29A.

\end{document}